\title[A model for the orbifold Chow ring of $\IP(w)$]{A model
  for the orbifold Chow ring \\of weighted projective spaces}
\author{Samuel Boissi\`ere \and \'Etienne Mann \and Fabio Perroni}
\thanks{The third author was partially supported by SNF grant No 200020-107464/1}
\address{Samuel Boissi{\`e}re, Laboratoire J.A.Dieudonn\'e, UMR CNRS 6621,
         Universite de Nice Sophia-Antipolis, Parc Valrose, 06108 Nice}
\email{sb@math.unice.fr}
\address{\'Etienne Mann, SISSA, Via Beirut 2-4, 34014 Trieste, Italy}
\email{mann@sissa.it}
\address{Fabio Perroni, Institut f\"ur Mathematik, Universit\"at Z\"urich, Winterthurerstrasse 190,
8057 Z\"urich, Switzerland}
\email{fabio.perroni@math.unizh.ch}
\def\cf{\textit{cf.}\kern.3em}
\def\resp{\textit{resp.}\kern.3em}
\numberwithin{equation}{section} \makeatletter
\newcommand{\IN}{\mathbb{N}}
\newcommand{\IQ}{\mathbb{Q}}
\newcommand{\IC}{\mathbb{C}}
\newcommand{\IP}{\mathbb{P}}
\newcommand{\ii}{\mathrm{i}}
\newcommand{\jj}{\mathrm{j}}
\newcommand{\ds}{\displaystyle}
\newcommand{\bs}{\boldsymbol}
\newcommand{\cO}{\mathcal O}
\newcommand{\cU}{\mathcal U}
\newcommand{\model}[2]{\ll #1,#2\gg}
\newcommand{\kmin}{k_{\text{min}}}
\newcommand{\kmax}{k_{\text{max}}}
\DeclareMathOperator{\gr}{\mathrm gr} \DeclareMathOperator{\orb}{\mathrm orb}
\DeclareMathOperator{\Sp}{\mathrm Sp}
\theoremstyle{plain}
\newtheorem{theorem}[equation]{Theorem}
\newtheorem{proposition}[equation]{Proposition}
\newtheorem{lemma}[equation]{Lemma}
\theoremstyle{definition}
\newtheorem{example}[equation]{Example}
\newtheorem{remark}[equation]{Remark}
\begin{document}

\begin{abstract}
We construct an isomorphism of graded Frobenius algebras between the orbifold
Chow ring of weighted projective spaces and graded algebras of groups
of roots of the unity.
\end{abstract}

\maketitle

%%%%%%%%%%%%%%%%%%%%%%%%%%%%%%%%%%%%%%%%%%%%%%%%%%%%%%%%%%%%%%%%%%%%%%
%%%%%%%%%%%%%%%%%%%%%%%%%%%%%%%%%%%%%%%%%%%%%%%%%%%%%%%%%%%%%%%%%%%%%%
\section{Introduction}
%%%%%%%%%%%%%%%%%%%%%%%%%%%%%%%%%%%%%%%%%%%%%%%%%%%%%%%%%%%%%%%%%%%%%%
%%%%%%%%%%%%%%%%%%%%%%%%%%%%%%%%%%%%%%%%%%%%%%%%%%%%%%%%%%%%%%%%%%%%%%

Recently, particular attention has been given to the study of the orbifold
cohomology ring of weighted projective spaces. This cohomology originates in
physics and has been defined mathematically by Chen and Ruan in \cite{CR}. It
has been further developed and adapted to the language of stacks by Abramovich,
Graber and Vistoli in \cite{AGV}. In \cite{Jiang} Jiang studies the orbifold
cohomology ring of weighted projective spaces by means of their simplicial
toric fan with an explicit computation for $\IP(1,2,2,3,3,3)$. In \cite{BCS}
Borisov, Chen and Smith prove a formula for the orbifold Chow ring of toric
Deligne-Mumford stacks in terms of their stacky fan, that can be applied to
weighted projective spaces. In \cite{CH}, Chen and Hu obtain a general formula
for the computation of the orbifold product of abelian orbifolds, and apply it
to weighted projective spaces. In \cite{Holm}, Holm uses symplectic geometry to
compute a presentation of their integral orbifold cohomology ring with
generators and relations. In this note, we give an alternative description:
Starting from the computation of their orbifold cohomology as given by the
second author in \cite{Mann}, we exhibit a short and comprehensible model as a
graded group algebra over some specific group of roots of the unity.

More precisely, for a given sequence of weights $w:=(w_0,\ldots,w_n)$, we
construct (see Theorem \ref{th:isom}) an isomorphism of graded Frobenius
algebras between the orbifold Chow ring $A^\star_{\orb}(\IP(w))$ and a suitably graded
algebra $\gr_F^\star\IC[\cU_{|w|}]$ where $|w|:=w_0+\cdots+w_n$ and $\cU_{|w|}$
denotes the group of $|w|$-roots of the unity (see \S\ref{subsec:model} for the
construction of the grading). This construction uses some combinatorics
associated to the weights, partially present in Douai and Sabbah \cite{DS} and
the second author in \cite{Mann} (see \S\ref{section:combinatorics}).

This new description of the orbifold Chow ring of weighted projective spaces is
interesting mainly for three reasons. First, this model is not a description
with generators of the ring and a list of relations: Instead, it gives a
presentation such that the generators of the ring are a basis of the underlying
vector space, and the ring structure is natural. Second, this description is
analogous to the one of a global symplectic quotient stack $[V/G]$ where $V$ is
a symplectic vector space and $G\subset\Sp(V)$ a finite group: Ginzburg and
Kaledin in \cite{GK} observed that the ring $A^\star_{\orb}([V/G])$ is
isomorphic to $\gr_F^\star\IC[G]^G$ (for an appropriate grading). Third, the
computation of this orbifold Chow ring is an essential step for studying the
Cohomological Crepant Resolution Conjecture for weighted projective spaces (see
\cite{BMP} for such computations).

%%%%%%%%%%%%%%%%%%%%%%%%%%%%%%%%%%%%%%%%%%%%%%%%%%%%%%%%%%%%%%%%%%%%%%
%%%%%%%%%%%%%%%%%%%%%%%%%%%%%%%%%%%%%%%%%%%%%%%%%%%%%%%%%%%%%%%%%%%%%%
\section{Some combinatorics}
\label{section:combinatorics}
%%%%%%%%%%%%%%%%%%%%%%%%%%%%%%%%%%%%%%%%%%%%%%%%%%%%%%%%%%%%%%%%%%%%%%
%%%%%%%%%%%%%%%%%%%%%%%%%%%%%%%%%%%%%%%%%%%%%%%%%%%%%%%%%%%%%%%%%%%%%%

Let $n\geq 1$ be an integer and $w:=(w_0,\ldots,w_n)$ a sequence of positive
integers (or \emph{weights}). Set $|w|:=w_0+\cdots+w_n$. For $\ell\in\IN^*$,
denote by $\cU_\ell$ the group of $\ell$-th roots of the unity and set
$\cU:=\bigcup\limits_{\ell\in\IN^*}\cU_\ell$. We define an order on the group
$\cU$ by taking the principal determination of the argument, inducing a
bijection:
$$
\begin{array}{rcl}\gamma\colon\,\cU &\longrightarrow& [0,1[\cap\IQ\\
 g&\longmapsto &\gamma(g) \text{ where } \exp(2\ii\pi\gamma(g))=g.
\end{array}
$$
For $g\in\cU$, put:
\begin{align*}
I(g)&:=\left\{i\in[\![0,n]\!]\mid g^{w_i}=1\right\},\\
a(g)&:=\sum_{i=0}^n\{\gamma(g)w_i\},
\end{align*}
where $\{\cdot\}$ is the fractional part. Note that $I(g)=I(g^{-1})$. One has:
\begin{equation}
\label{eq:1} \{\gamma(g)w_i\}=\begin{cases} 0 &\text{if } i\in I(g)\\
1-\{\gamma(g^{-1})w_i\}& \text{otherwise}\end{cases}
\end{equation}
hence:
\begin{equation}
\label{eq:2} a(g)+a(g^{-1})=n+1-\# I(g).
\end{equation}
We order the disjoint union $\bigsqcup\limits_{i=0}^n\cU_{w_i}$ by the injection:
$$
\begin{array}{rcl}
\ds\bigsqcup\limits_{i=0}^n\cU_{w_i}&\longrightarrow &[0,1[\times
[\![0,n]\!]\\
g&\longmapsto&(\gamma(g),i),
\end{array}
$$
where $[0,1[\times [\![0,n]\!]$ is given the lexicographic order. This induces
an increasing bijection
$s\colon[\![0,|w|-1]\!]\to\bigsqcup\limits_{i=0}^{n}\cU_{w_{i}}$.

\begin{example}
Take $w=(1,2,3)$. The enumeration is:
\begin{align*}
\gamma&\colon\cU_1\sqcup\cU_2\sqcup\cU_3\to\{0\}\sqcup\{0,1/2\}\sqcup\{0,1/3,2/3\},\\
\gamma s&\colon[\![0,5]\!] \to [0,0,0,1/3,1/2,2/3].
\end{align*}
\end{example}

The understanding of the growth of the composite map $\gamma
s:[\![0,|w|-1]\!]\to[0,1[$ is central in the sequel. First note that for
$g\in\cU$, the elements in the image of $\gamma s$ less or equal to $\gamma(g)$
are:
\begin{equation}
\label{eq:3}
0,\frac{1}{w_0},\ldots,\frac{[\gamma(g)w_0]}{w_0},0,\frac{1}{w_1},\ldots,\frac{[\gamma(g)w_1]}{w_1},\ldots,
0,\frac{1}{w_n},\ldots,\frac{[\gamma(g)w_n]}{w_n}
\end{equation}
where $[\cdot]$ is the integer part. In particular, $\#(\gamma
s)^{-1}(\gamma(g))=\#I(g)$. The growth is then controlled by the values, for
$g\in\bigcup\limits_{i=0}^n \cU_{w_i}$:
\begin{align*}
\kmin(g)&:=\min\left\{k\in[\![0,|w|-1]\!]\mid\gamma s(k)=\gamma(g)\right\},\\
\kmax(g)&:=\max\left\{k\in[\![0,|w|-1]\!]\mid\gamma s(k)=\gamma(g)\right\}.
\end{align*}
One has the relation: $\kmax(g)=\kmin(g)+(\#I(g)-1)$. Otherwise stated:
\begin{equation}
\label{eq:4} \gamma s(\kmin(g)+d)=\gamma (g)\quad \forall d=0,\ldots,\#I(g)-1.
\end{equation}
Another consequence of (\ref{eq:3}) is:
$$
\#\left\{k\in[\![0,|w|-1]\!]\mid\gamma s(k)\leq
\gamma(g)\right\}=n+1+\sum_{i=0}^n[\gamma(g)w_i].
$$
One deduces:
\begin{align*}
\kmin(g)&=(n+1-\#I(g))+\sum_{i=0}^n[\gamma(g)w_i],\\
\kmax(g)&=n+\sum_{i=0}^n[\gamma(g)w_i].
\end{align*}
Using that $\sum\limits_{i=0}^n[\gamma(g)w_i]=|w|\gamma(g)-a(g)$ and Formula
(\ref{eq:2}) one gets:
\begin{align}
\label{eq:5} \kmin(g)&=a(g^{-1})+|w|\gamma(g),\\
\label{eq:5bis} \kmax(g)&=n+|w|\gamma(g)-a(g).
\end{align}

If $g\in\cU$ but $g\notin\bigcup\limits_{i=0}^n\cU_{w_i}$ then $I(g)=\emptyset$
and:
\begin{align*}
\#\left\{k\in[\![0,|w|-1]\!]\mid\gamma s(k)<
\gamma(g)\right\}&=n+1+\sum_{i=0}^n[\gamma(g)w_i]\\
&=n+1+|w|\gamma(g)-a(g)\\
&=|w|\gamma(g)+a(g^{-1}) \text{ by Formula (\ref{eq:2})}
\end{align*}
so we can extend the definition of $\kmin(g)$ by setting:
\begin{equation}
\label{eq:6} \kmin(g):=a(g^{-1})+|w|\gamma(g)\quad \forall g\in\cU,
\end{equation}
with the property that $k\geq \kmin(g)$ if and only if $\gamma
s(k)\geq\gamma(g)$ (resp. $\gamma s(k)>\gamma(g)$ if
$g\notin\bigcup\limits_{i=0}^n\cU_{w_i}$).

For $g,h\in\bigcup\limits_{i=0}^n\cU_{w_i}$, set:
$$
J(g,h):=\left\{i\in[\![0,n]\!]\mid\{\gamma(g)w_i\}+\{\gamma(h)w_i\}+\{\gamma(gh)^{-1}w_i\}=2\right\}.
$$
Using Formula (\ref{eq:1}) and noting that:
\begin{equation}
\label{eq:7} \{\gamma(gh)w_i\}\equiv\{\gamma(g)w_i\}+\{\gamma(h)w_i\} \mod 1
\end{equation}
one gets the following decomposition in disjoint union:
$$
[\![0,n]\!]=(I(g)\cup I(h))\sqcup\left(I(gh)\setminus (I(g)\cap
I(h))\right)\sqcup J(g,h)\sqcup J(g^{-1},h^{-1})
$$
or more precisely:
\begin{equation}
\label{eq:9} \{\gamma(g)w_i\}+\{\gamma(h)w_i\}-\{\gamma(gh)w_i\}=\begin{cases}
0 & \text{if } i\in I(g)\cup I(h)\\1 & \text{if } i\in I(gh)\setminus (I(g)\cap
I(h))\\0& \text{if } i\in J(g^{-1},h^{-1})\\ 1 & \text{if } i\in
J(g,h).\end{cases}
\end{equation}

This implies:
\begin{equation}\label{eq:10}
a(g)+a(h)-a(gh)=\#\left(I(gh)\setminus(I(g)\cap I(h))\right)+\#J(g,h).
\end{equation}

%%%%%%%%%%%%%%%%%%%%%%%%%%%%%%%%%%%%%%%%%%%%%%%%%%%%%%%%%%%%%%%%%%%%%%
%%%%%%%%%%%%%%%%%%%%%%%%%%%%%%%%%%%%%%%%%%%%%%%%%%%%%%%%%%%%%%%%%%%%%%
\section{Orbifold Chow ring of weighted projective spaces}
%%%%%%%%%%%%%%%%%%%%%%%%%%%%%%%%%%%%%%%%%%%%%%%%%%%%%%%%%%%%%%%%%%%%%%
%%%%%%%%%%%%%%%%%%%%%%%%%%%%%%%%%%%%%%%%%%%%%%%%%%%%%%%%%%%%%%%%%%%%%%

%%%%%%%%%%%%%%%%%%%%%%%%%%%%%%%%%%%%%%%%%%%%%%%%%%%%%%%%%%%%%%%%%%%%%%
\subsection{Weighted projective spaces}
\label{subsec:weight-proj-spac}\text{}
%%%%%%%%%%%%%%%%%%%%%%%%%%%%%%%%%%%%%%%%%%%%%%%%%%%%%%%%%%%%%%%%%%%%%%

Let $w:=(w_0,\ldots,w_n)$ be a sequence of weights. The group $\IC^*$ acts on
$\IC^{n+1}\setminus\{0\}$ by:
$$
\lambda\cdot(x_{0}, \ldots,x_{n}):=(\lambda^{w_{0}}x_{0}, \ldots
,\lambda^{w_{n}}x_{n}).
$$
The \emph{weighted projective space} $\IP(w)$ is defined as the quotient stack
$[(\IC^{n+1}\setminus\{0\})/\IC^*]$. It is a smooth proper Deligne-Mumford
stack whose coarse moduli space, denoted $|\IP(w)|$, is a projective variety of
dimension $n$.

For any subset $I:=\{i_{1}, \ldots ,i_{k}\}\subset \{0, \ldots ,n\}$, set
$w_I:=(w_{i_{1}}, \ldots ,w_{i_{k}})$. There is a natural closed embedding
$\iota_{I}:\IP(w_I)\hookrightarrow\IP(w)$. The weighted projective space
$\IP(w)$ comes with a natural invertible sheaf $\cO_{\IP(w)}(1)$ defined as
follows: For any scheme $X$ and any stack morphism $X\to \IP(w)$ given by a
principal $\IC^*$-bundle $P\to X$ and a $\IC^*$-equivariant morphism $P\to
\IC^{n+1}\setminus\{0\}$, one defines $\cO_{\IP(w)}(1)_{X}$ as the sheaf of
sections of the associated line bundle of $P$. This sheaf is compatible with
the embedding $\iota_{I}$ in the sense that
$\iota^*_I\cO_{\IP(w)}(1)=\cO_{\IP(w_I)}(1)$.

%%%%%%%%%%%%%%%%%%%%%%%%%%%%%%%%%%%%%%%%%%%%%%%%%%%%%%%%%%%%%%%%%%%%%%
\subsection{Computation of the orbifold Chow ring}
\label{subsec:orbifold-chow}\text{}
%%%%%%%%%%%%%%%%%%%%%%%%%%%%%%%%%%%%%%%%%%%%%%%%%%%%%%%%%%%%%%%%%%%%%%

We denote by $A^\star_{\orb}(\mathcal{X})$ the orbifold Chow ring with
\emph{complex} coefficients of a Deligne-Mumford stack (or \emph{orbifold})
$\mathcal{X}$. For toric stacks, such as weighted projective spaces (see
\cite{BMP}), it is isomorphic to the even orbifold cohomology. As a vector
space, $A^\star_{\orb}(\mathcal{X})=A^\star(\mathcal{IX})$ where $\mathcal{IX}$
is the \emph{inertia stack} of $\mathcal{X}$.

We recall the results of the second author in \cite{Mann}. The inertia stack of
$\IP(w)$ decomposes as:
$$
\mathcal{I}\IP(w)=\coprod_{g\in\bigcup\limits_{i=0}^n\cU_{w_i}}\IP(w_{I(g)}).
$$
Note that $\dim\IP(w_{I(g)})=\#I(g)-1$.

\begin{example}
Take again $w=(1,2,3)$. The components of the inertia stack of $\IP(1,2,3)$ are
indexed by the roots $1,\jj,-1,\jj^2$ where $\jj$ is the primitive third root
of the unity:
$$
\mathcal{I}\IP(1,2,3)=\IP(1,2,3)\amalg\IP(3)\amalg\IP(2)\amalg\IP(3).
$$
\end{example}

For $g\in\bigcup\limits_{i=0}^n\cU_{w_{i}}$ and
$d\in\{0,\ldots,\dim\IP(w_{I(g)})\}$, define the classes\footnote{The
normalizations factor differs from \cite{Mann}.}:
$$
\eta_{g}^{d}:=\left(\prod_{i=0}^{n}w_{i}^{-\{\gamma(g)w_{i}\}}\right)\cdot
c_{1}(\mathcal{O}_{\IP(w_{I(g)})}(1))^{d}\in A^{d}(|\IP(w_{I(g)})|).
$$

The first result concerns the vector space decomposition of
$A^{\star}_{\orb}(\IP(w))$.

\begin{proposition}\label{prop:vector}\emph{\cite[Proposition 3.9 \& Corollary 3.11]{Mann}}
  \begin{enumerate}
  \item The structure of graded vector space of $A^{\star}_{\orb}(\IP(w))$ is:
    \begin{align*}
      A_{\orb}^{\star}(\IP(w))=\bigoplus_{g\in\bigcup\limits_{i=0}^n\cU_{w_{i}}}
      A^{\star-a(g)}(|\IP(w_{I(g)})|).
    \end{align*}
  \item The dimension of the vector space
    $A_{\orb}^{\star}(\IP(w))$ is $|w|=w_{0}+\cdots+w_{n}$.
  \item The set $\bs{\eta}:=\left\{\eta_{g}^{d}\mid g\in \bigcup\limits_{i=0}^n\cU_{w_{i}},
    d\in[\![0,\#I(g)-1]\!]\right\}$ is a basis of
    $A_{\orb}^{\star}(\IP(w))$. The orbifold degree of
    $\eta_{g}^{d}$ is $\deg(\eta_g^d)=d+a(g)$.
  \end{enumerate}
\end{proposition}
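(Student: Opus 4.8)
The plan is to recognize that all three statements are really about the inertia stack decomposition $\mathcal{I}\IP(w)=\coprod_{g}\IP(w_{I(g)})$ together with the standard structure of the Chow ring of a weighted projective space, so the only genuine content is the bookkeeping of the grading shift. First I would recall that for a single weighted projective space $\IP(v)$ of dimension $m$, one has $A^\star(|\IP(v)|)\cong\IC[h]/(h^{m+1})$ with $h=c_1(\cO_{\IP(v)}(1))$ concentrated in degrees $0,\dots,m$; this gives, component by component, $A^\star(|\IP(w_{I(g)})|)=\bigoplus_{d=0}^{\#I(g)-1}\IC\cdot c_1(\cO_{\IP(w_{I(g)})}(1))^d$, since $\dim\IP(w_{I(g)})=\#I(g)-1$. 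Summing over the index set $\bigcup_{i}\cU_{w_i}$ and using $A^\star_{\orb}(\IP(w))=A^\star(\mathcal{I}\IP(w))=\bigoplus_g A^\star(|\IP(w_{I(g)})|)$ immediately yields that the classes $\eta_g^d$ (which differ from $c_1(\cO_{\IP(w_{I(g)})}(1))^d$ only by a nonzero scalar) form a basis: this is statement (3) as a statement about the underlying vector space, and statement (1) once we install the shift.

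Next I would pin down the grading. In Chen--Ruan / Abramovich--Graber--Vistoli theory, the sector of $\mathcal{I}\IP(w)$ indexed by $g$ is placed in orbifold degree shifted by the age of $g$, and here the age of the component indexed by $g$ acting on the normal-type data is exactly $a(g)=\sum_i\{\gamma(g)w_i\}$ — this is the definition used in \cite{Mann}, and it is why $a(g)$ was introduced in \S\ref{section:combinatorics}. Hence a class in $A^d(|\IP(w_{I(g)})|)$ contributes to $A^{d+a(g)}_{\orb}(\IP(w))$, which is precisely the reindexing $A^{\star}_{\orb}(\IP(w))=\bigoplus_g A^{\star-a(g)}(|\IP(w_{I(g)})|)$ of (1), and gives $\deg(\eta_g^d)=d+a(g)$ in (3). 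I would remark that $a(g)\in\IZ$ exactly on the components that appear, i.e.\ for $g\in\bigcup_i\cU_{w_i}$, so the orbifold grading is integral here; for $d$ ranging in $[\![0,\#I(g)-1]\!]$ the degrees $d+a(g)$ stay in the expected range, consistent with Poincar\'e duality via \eqref{eq:2}.

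Finally, statement (2): the dimension is $\sum_{g\in\bigcup_i\cU_{w_i}}\#I(g)$, and by the observation in \S\ref{section:combinatorics} that $\#(\gamma s)^{-1}(\gamma(g))=\#I(g)$, this sum counts the fibers of the increasing bijection $s\colon[\![0,|w|-1]\!]\to\bigsqcup_{i=0}^n\cU_{w_i}$ — more transparently, $\sum_{g\in\bigcup_i\cU_{w_i}}\#I(g)=\sum_{g}\#\{i\mid g^{w_i}=1\}=\sum_{i=0}^n\#\cU_{w_i}=\sum_{i=0}^n w_i=|w|$, interchanging the order of summation. I expect the only delicate point to be the normalization/age conventions: one must check that the $a(g)$ defined combinatorially in this paper matches the degree shift in the Chen--Ruan grading with the particular choice of $\cO_{\IP(w)}(1)$ used here, and that the footnoted change of normalization factor in $\eta_g^d$ relative to \cite{Mann} does not affect the graded-vector-space statement (it does not, being a nonzero scalar per component). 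Everything else is formal, so I would simply cite \cite[Prop.~3.9 \& Cor.~3.11]{Mann} for the precise degree computation and present the dimension count as above.
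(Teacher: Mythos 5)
The paper does not prove this proposition; it is imported verbatim from \cite[Proposition 3.9 \& Corollary 3.11]{Mann}, so there is no internal proof to compare against. Your reconstruction is the standard argument and its main lines are correct: the inertia-stack decomposition, the identification $A^\star(|\IP(w_{I(g)})|)\cong\IC[h]/(h^{\#I(g)})$ with $h=c_1(\cO_{\IP(w_{I(g)})}(1))$, the age shift $\age(g)=\sum_{i\notin I(g)}\{\gamma(g)w_i\}=a(g)$, and the dimension count $\sum_{g}\#I(g)=\sum_{i=0}^n\#\cU_{w_i}=|w|$ by interchanging the order of summation all hold, and together they give (1)--(3).

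One aside in your write-up is false and should be deleted: it is not true that $a(g)\in\IZ$ for $g\in\bigcup_{i=0}^n\cU_{w_i}$, and the orbifold grading is not integral in general. For $w=(2,3)$ and $g=-1\in\cU_2$ one has $a(g)=\{1\}+\{3/2\}=1/2$. The grading of $A^\star_{\orb}(\IP(w))$ genuinely takes values in $\IQ$, and integrality holds precisely in the Gorenstein case $w_i\mid |w|$ for all $i$, as the paper points out in the remark following Lemma \ref{lem:degree}. This does not damage the proposition --- nothing in (1)--(3) claims integrality, and your appeal to Formula (\ref{eq:2}) for the range $0\le d+a(g)\le n$ is still valid --- but the claimed ``integral grading'' is a genuine error of fact rather than of bookkeeping, so be careful not to rely on it elsewhere.
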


The second result expresses the orbifold Poincar\'e duality, denoted by
$\langle-,-\rangle$, in the basis $\bs{\eta}$. We set
$<w>:=\prod\limits_{i=0}^{n}w_{i}$.

\begin{proposition}\label{prop:dualite}
   Let ${\eta}^{d_0}_{g_0}$ and
   ${\eta}^{d_1}_{g_1}$ be two elements of the basis
   $\bs{{\eta}}$.
\begin{enumerate}
\item  If  $g_0g_1\neq 1$, then $\langle {\eta}^{d_0}_{g_0},
  {\eta}^{d_1}_{g_1} \rangle = 0$.
   \item If $g_0g_1=1$ then:
  $$
    \langle {\eta}^{d_0}_{g_0},{\eta}^{d_1}_{g_1}\rangle=
  \begin{cases}\frac{1}{<w>} & \text{if } \deg({\eta}^{d_0}_{g_0})+\deg({\eta}^{d_1}_{g_1})=n \\0&
    \text{otherwise.}  \end{cases}
  $$
   \end{enumerate}
 \end{proposition}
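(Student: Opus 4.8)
The plan is to reduce everything to the definition of the orbifold Poincaré pairing in terms of the decomposition of the inertia stack. Recall that for a component $\IP(w_{I(g)})$ of $\mathcal{I}\IP(w)$, the "opposite" component under the canonical involution $\mathcal{I}\IP(w)\to\mathcal{I}\IP(w)$ is $\IP(w_{I(g^{-1})})$; since $I(g)=I(g^{-1})$ these are the \emph{same} weighted projective space, of dimension $\#I(g)-1$. The orbifold pairing $\langle-,-\rangle$ is, up to the ordinary Poincaré pairing on the coarse space $|\IP(w_{I(g)})|$, supported on pairs of classes sitting on a component and its opposite. Thus $\langle\eta_{g_0}^{d_0},\eta_{g_1}^{d_1}\rangle=0$ automatically whenever the components $\IP(w_{I(g_0)})$ and $\IP(w_{I(g_1)})$ are not exchanged by the involution, \emph{a fortiori} when $g_0g_1\neq 1$ (note $g_0=g_1^{-1}$ forces $I(g_0)=I(g_1)$). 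This disposes of part (1). One must be a little careful: it is conceivable that $g_0\neq g_1^{-1}$ yet $I(g_0)=I(g_1)$, so that the two classes live on isomorphic coarse spaces; but the involution on $\mathcal{I}\IP(w)$ sends the $g$-component to the $g^{-1}$-component as \emph{indexed} pieces of the coproduct, so the pairing genuinely vanishes unless $g_1=g_0^{-1}$. This is the one point I expect to need the precise bookkeeping of \cite{Mann}/\cite{CR}, and it is the main (though mild) obstacle.

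For part (2), assume $g_1=g_0^{-1}=:g^{-1}$, write $g_0=g$, and set $I:=I(g)=I(g^{-1})$, $m:=\#I-1=\dim\IP(w_I)$. By the definition of the orbifold pairing, $\langle\eta_g^{d_0},\eta_{g^{-1}}^{d_1}\rangle$ equals the ordinary Poincaré pairing on $|\IP(w_I)|$ of the underlying Chow classes. Unwinding the normalization, $\eta_g^{d_0}=\bigl(\prod_i w_i^{-\{\gamma(g)w_i\}}\bigr)c_1(\cO_{\IP(w_I)}(1))^{d_0}$ and $\eta_{g^{-1}}^{d_1}=\bigl(\prod_i w_i^{-\{\gamma(g^{-1})w_i\}}\bigr)c_1(\cO_{\IP(w_I)}(1))^{d_1}$, so the product of classes is $c_1(\cO_{\IP(w_I)}(1))^{d_0+d_1}$ times the scalar $\prod_i w_i^{-\{\gamma(g)w_i\}-\{\gamma(g^{-1})w_i\}}$. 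Since for $i\in I$ one has $g^{w_i}=1$ hence $\{\gamma(g)w_i\}=\{\gamma(g^{-1})w_i\}=0$, while for $i\notin I$ Formula (\ref{eq:1}) gives $\{\gamma(g)w_i\}+\{\gamma(g^{-1})w_i\}=1$, the scalar is
$$
\prod_{i\in I}w_i^{0}\cdot\prod_{i\notin I}w_i^{-1}=\frac{\prod_{i\in I}w_i}{\prod_{i=0}^n w_i}=\frac{\prod_{i\in I}w_i}{<w>}.
$$

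It remains to compute $\int_{|\IP(w_I)|}c_1(\cO_{\IP(w_I)}(1))^{d_0+d_1}$. This integral is nonzero precisely when $d_0+d_1=m=\#I-1=\dim|\IP(w_I)|$, in which case it equals $1/\prod_{i\in I}w_i$ (the degree of $\cO(1)$ on the weighted projective space $\IP(w_I)$, computed \emph{as a stack}, i.e. the top self-intersection of the orbifold hyperplane class). Multiplying by the scalar above, the two factors $\prod_{i\in I}w_i$ cancel and we obtain exactly $1/<w>$. Finally, by Proposition \ref{prop:vector}(3), $\deg(\eta_g^{d_0})+\deg(\eta_{g^{-1}}^{d_1})=(d_0+a(g))+(d_1+a(g^{-1}))$; using Formula (\ref{eq:2}), $a(g)+a(g^{-1})=n+1-\#I$, so this total degree equals $n$ if and only if $d_0+d_1=\#I-1=m$. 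Hence the pairing is $1/<w>$ exactly when the orbifold degrees add to $n$, and $0$ otherwise, which is the claim. The only quantitative input beyond the combinatorics already established in \S\ref{section:combinatorics} is the value $\int_{|\IP(w_I)|}c_1(\cO_{\IP(w_I)}(1))^{\dim}=1/\prod_{i\in I}w_i$, which is standard for weighted projective stacks; verifying that the normalization of the orbifold pairing matches this is the second place where one must be careful, but it is routine.
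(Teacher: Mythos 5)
Your proof is correct and follows essentially the same route as the paper: the vanishing statements are read off from the definition of the orbifold Poincar\'e pairing, and the nonzero value is obtained by combining the normalization scalars via Formula (\ref{eq:1}) with the top self-intersection $\int_{\IP(w_I)}c_1(\cO_{\IP(w_I)}(1))^{\#I-1}=1/\prod_{i\in I}w_i$. The only difference is that the paper packages this last computation as a citation of \cite[Proposition 3.13]{Mann}, whereas you unwind it explicitly.
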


 \begin{proof} The vanishings come from the definition of the orbifold Poincar{\'e}
 duality. Assume that $\deg(\eta_{g_0}^{d_0})+\deg(\eta_{g_0^{-1}}^{d_1})=n$.
 According to \cite[Proposition 3.13]{Mann}, one has:
\begin{align*}
    \langle
    {\eta}^{d_0}_{g_0},{\eta}^{d_1}_{g_0^{-1}}\rangle
&=\left(\prod_{i=0}^{n}w_{i}^{-\{\gamma(g_0) w_{i}\}-\{\gamma(g_0^{-1})
    w_{i}\}}\right) \prod_{i\in I(g_0)}w_{i}^{-1}.
\end{align*}
so Formula (\ref{eq:1}) gives $\langle
    {\eta}^{d_0}_{g_0},{\eta}^{d_1}_{g_0^{-1}}\rangle=\frac{1}{<w>}$.
\end{proof}

The third result computes the orbifold cup product, denoted $\cup$, in the
basis $\bs{{\eta}}$.

\begin{proposition}\label{prop:cup} Let
  ${\eta}^{d_{0}}_{g_{0}}$ and
  ${\eta}^{d_{1}}_{g_{1}}$ be two elements of the basis $\bs{{\eta}}$.
  It is:
  \begin{align*}
  {\eta}^{d_{0}}_{g_{0}}\cup {\eta}^{d_{1}}_{g_{1}}=
  {\eta}^{d}_{g_{0}g_{1}}.
  \end{align*}
  with
  $d:=\deg(\eta^{d_{0}}_{g_{0}})+\deg(\eta^{d_{1}}_{g_{1}})-
a(g_{0}g_{1})$.
\end{proposition}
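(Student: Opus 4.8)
The plan is to compute the product directly from the Chen--Ruan definition of the orbifold cup product for an abelian orbifold such as $\IP(w)$, using the decomposition $\mathcal{I}\IP(w)=\coprod_g\IP(w_{I(g)})$ and the compatibility $\iota_I^*\cO_{\IP(w)}(1)=\cO_{\IP(w_I)}(1)$. Fix $g_0,g_1\in\bigcup_i\cU_{w_i}$ and set $h:=g_0g_1$. First I would identify the relevant twisted $3$-sector: since $g_0^{w_i}=1$ and $h^{w_i}=1$ together force $g_1^{w_i}=1$, one has $I(g_0)\cap I(g_1)=I(g_0)\cap I(g_1)\cap I(h)$, so the common fixed locus $\IP(w_{I(g_0)})\cap\IP(w_{I(g_1)})$ equals $\IP(w_K)$ with $K:=I(g_0)\cap I(g_1)$, and it embeds by $\iota$ into each of $\IP(w_{I(g_0)})$, $\IP(w_{I(g_1)})$ and $\IP(w_{I(h)})$ (the involution of the inertia stack acts as the identity on $\IP(w_{I(h)})=\IP(w_{I(h^{-1})})$). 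By the orbifold product formula, $\eta^{d_0}_{g_0}\cup\eta^{d_1}_{g_1}$ is then $(\iota_{I(h)})_*$ of the product of the pullbacks of $\eta^{d_0}_{g_0}$ and $\eta^{d_1}_{g_1}$ to $\IP(w_K)$, multiplied by the Euler class of the obstruction bundle $\mathcal{E}$ over $\IP(w_K)$.

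The key geometric input is the shape of $\mathcal{E}$: it is a direct sum of line bundles, one for each $i\in J(g_0,g_1)$ (by the standard genus-$0$ dimension count these are exactly the coordinate directions contributing to the obstruction), and its Euler class is $e(\mathcal{E})=\big(\prod_{i\in J(g_0,g_1)}w_i\big)\,\xi^{\#J(g_0,g_1)}$, where $\xi:=c_1(\cO_{\IP(w_K)}(1))$. Granting this, I would assemble the three factors. By $\iota^*\cO(1)=\cO(1)$, the pullback of $\eta^{d_j}_{g_j}$ to $\IP(w_K)$ is $\big(\prod_i w_i^{-\{\gamma(g_j)w_i\}}\big)\xi^{d_j}$. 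And the Gysin map along $\iota_{I(h)}\colon\IP(w_K)\hookrightarrow\IP(w_{I(h)})$ satisfies $(\iota_{I(h)})_*(\xi^m)=\big(\prod_{i\in I(h)\setminus K}w_i\big)\,\zeta^{m+\#(I(h)\setminus K)}$, with $\zeta:=c_1(\cO_{\IP(w_{I(h)})}(1))$; this follows from the normalization $\int_{|\IP(w_J)|}c_1(\cO(1))^{\#J-1}=1/\!\prod_{i\in J}w_i$ together with $\iota^*\cO(1)=\cO(1)$ and the projection formula.

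Multiplying these out, $\eta^{d_0}_{g_0}\cup\eta^{d_1}_{g_1}$ is a constant times a power of $\zeta$. For the exponent, Formula (\ref{eq:10}) gives
\[
d_0+d_1+\#J(g_0,g_1)+\#\big(I(h)\setminus K\big)=d_0+d_1+a(g_0)+a(g_1)-a(h)=\deg(\eta^{d_0}_{g_0})+\deg(\eta^{d_1}_{g_1})-a(h)=d,
\]
so the power is $\zeta^d$ (and both sides vanish when $d\notin[\![0,\#I(h)-1]\!]$). For the constant, I would compare the exponent of each $w_i$ using the disjoint decomposition (\ref{eq:9}): the contribution $-\{\gamma(g_0)w_i\}-\{\gamma(g_1)w_i\}$, corrected by $+1$ exactly when $i\in J(g_0,g_1)$ and by $+1$ exactly when $i\in I(h)\setminus K$, works out to $-\{\gamma(h)w_i\}$ in each of the four cases of (\ref{eq:9}), because $J(g_0,g_1)$ and $I(h)\setminus K$ are precisely the two pieces on which $\{\gamma(g_0)w_i\}+\{\gamma(g_1)w_i\}-\{\gamma(h)w_i\}=1$. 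Hence the constant is $\prod_i w_i^{-\{\gamma(h)w_i\}}$, which is exactly the normalization factor in the definition of $\eta^d_h$, so $\eta^{d_0}_{g_0}\cup\eta^{d_1}_{g_1}=\eta^d_{g_0g_1}$.

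The main obstacle is the second step: pinning down $\mathcal{E}$ and reading off $e(\mathcal{E})=\big(\prod_{i\in J(g_0,g_1)}w_i\big)\xi^{\#J(g_0,g_1)}$, in particular that it carries no sign; this requires the deformation theory of genus-$0$ three-pointed twisted stable maps to $\IP(w)$, or equivalently the toric computation of \cite{BCS}. Everything after that is bookkeeping with (\ref{eq:9}) and (\ref{eq:10}). Alternatively one may bypass $\mathcal{E}$ entirely: since the pairing of Proposition \ref{prop:dualite} is nondegenerate and pairs $\eta^{d}_{g}$ nontrivially only with $\eta^{\,\#I(g)-1-d}_{g^{-1}}$, the product is determined by the genus-$0$ three-point invariants $\langle\eta^{d_0}_{g_0},\eta^{d_1}_{g_1},\eta^{d_2}_{g_2}\rangle$ of \cite{Mann} through the relation $\langle\eta^{d_0}_{g_0}\cup\eta^{d_1}_{g_1},\eta^{d_2}_{g_2}\rangle=\langle\eta^{d_0}_{g_0},\eta^{d_1}_{g_1},\eta^{d_2}_{g_2}\rangle$, after which one again verifies the $w_i$-bookkeeping via (\ref{eq:9}).
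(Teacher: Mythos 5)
Your proof is correct, but it takes a longer and more foundational route than the paper. The paper's proof is essentially a two-line citation: it quotes the product formula of \cite[Corollary 3.18]{Mann}, which already packages the obstruction-bundle Euler class and the Gysin pushforward into the single correction factor $\prod_{i\in K(g_0,g_1)}w_i$ with $K(g_0,g_1)=J(g_0,g_1)\sqcup\bigl(I(g_0g_1)\setminus(I(g_0)\cap I(g_1))\bigr)$, and then observes that Formula (\ref{eq:9}) makes this factor cancel against the normalization constants of the $\eta$'s. What you do instead is unpack that citation: you rebuild the formula from the Chen--Ruan definition, identifying the $3$-sector as $\IP(w_{I(g_0)\cap I(g_1)})$, decomposing the obstruction bundle into the line bundles indexed by $J(g_0,g_1)$ (correctly characterized by $\{\gamma(g_0)w_i\}+\{\gamma(g_1)w_i\}+\{\gamma((g_0g_1)^{-1})w_i\}=2$, with Euler class $\prod_{i\in J}w_i\,\xi^{\#J}$), and computing the Gysin map via the normalization $\int c_1(\cO(1))^{\#J-1}=1/\prod_{i\in J}w_i$ and the projection formula; your final bookkeeping with (\ref{eq:9}) and (\ref{eq:10}) then coincides exactly with the paper's. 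The trade-off is clear: the paper's argument is short but opaque without \cite{Mann} in hand, whereas yours is self-contained modulo the one genuinely hard input you honestly flag --- the identification of the obstruction bundle --- which is precisely the content of the cited corollary (or of the toric computation in \cite{BCS}), so you have not eliminated the dependence on that deformation-theoretic fact, only isolated it. Your alternative via three-point invariants and the nondegenerate pairing of Proposition \ref{prop:dualite} is also viable and is closer in spirit to how such formulas are verified in practice.
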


\begin{remark}\label{rem:d,too,big}
   By Proposition \ref{prop:vector} and Formula (\ref{eq:10}), one has:
   $$
   d=a(g_0)+a(g_1)-a(g_0g_1)+d_0+d_1\geq 0.
   $$
   The formula for the cup product
   makes sense only with the following conventions:
   \begin{itemize}
\item If $g_{0}g_{1}\notin\bigcup\limits_{i=0}^n\cU_{w_i}$
   then $\eta_{g_{0}g_{1}}^{d}=0$. The reason is that the component of the inertia
   stack corresponding to $g_0g_1\in\cU$ is empty.
   \item
if $d>\dim\IP(w_{I(g_{0}g_{1})})$ then
   $\eta_{g_{0}g_{1}}^{d}=0$.

 \end{itemize}
\end{remark}

\begin{proof}[Proof of Proposition \ref{prop:cup}]
Set $K(g_{0},g_{1}):= J(g_{0},g_{1})\bigsqcup
\left(I(g_{0}g_{1})\setminus(I(g_{0})\cap I(g_{1}))\right)$. According to
\cite[Corollary 3.18]{Mann}, we have:
\begin{align*}
  {\eta}^{d_{0}}_{g_{0}}\cup {\eta}^{d_{1}}_{g_{1}}=
  \left(\prod_{i=0}^{n}w_{i}^{-\{\gamma(g_{0}w_{i})\}-\{\gamma(g_{1})w_{i}\}+
\{\gamma(g_{0}g_{1})w_{i}\}}\cdot\prod_{i\in K(g_{0},g_{1})} w_{i}
  \right)\cdot{\eta}^{d}_{g_{0}g_{1}}.
  \end{align*}
Formula (\ref{eq:9}) gives the result.
\end{proof}

%%%%%%%%%%%%%%%%%%%%%%%%%%%%%%%%%%%%%%%%%%%%%%%%%%%%%%%%%%%%%%%%%%%%%%
%%%%%%%%%%%%%%%%%%%%%%%%%%%%%%%%%%%%%%%%%%%%%%%%%%%%%%%%%%%%%%%%%%%%%%
\section{The model}
%%%%%%%%%%%%%%%%%%%%%%%%%%%%%%%%%%%%%%%%%%%%%%%%%%%%%%%%%%%%%%%%%%%%%%
%%%%%%%%%%%%%%%%%%%%%%%%%%%%%%%%%%%%%%%%%%%%%%%%%%%%%%%%%%%%%%%%%%%%%%

%%%%%%%%%%%%%%%%%%%%%%%%%%%%%%%%%%%%%%%%%%%%%%%%%%%%%%%%%%%%%%%%%%%%%%
\subsection{Construction of the model}\text{}
\label{subsec:model}
%%%%%%%%%%%%%%%%%%%%%%%%%%%%%%%%%%%%%%%%%%%%%%%%%%%%%%%%%%%%%%%%%%%%%%

Let $w:=(w_0,\ldots,w_n)$ be a sequence of weights. Consider the group
$\cU_{|w|}$ of $|w|$-th roots of the unity and take $\xi:=\exp(2\ii\pi/|w|)$ as
primitive $|w|$-th root. The set
$\bs{\xi}:=\left\{1,\xi,\ldots,\xi^{|w|-1}\right\}$ is a basis of the group
algebra $\IC[\cU_{|w|}]$ and we define:
$$
\deg(\xi^j):=j-|w|\gamma s(j)\quad \forall j=0,\ldots,|w|-1.
$$

\begin{example} Take again $w=(1,2,3)$. The degrees in the group $\cU_6$ are:
$$
\begin{array}{|c||c|c|c|c|c|c|}\hline
\cU_6 & 1 & \xi & \xi^2 & \xi^3 & \xi^4 & \xi^5\\
\hline \deg & 0 & 1 & 2 & 1 & 1 & 1\\
\hline
\end{array}
$$
\end{example}

\begin{lemma}\label{lem:degree}\text{}
\begin{enumerate}
\item For all $j$, $0\leq\deg(\xi^j)\leq n$.

\item For all $j,k$, $\deg(\xi^{j+k})\leq\deg(\xi^j)+\deg(\xi^k)$.
\end{enumerate}
\end{lemma}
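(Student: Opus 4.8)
The key observation is that $\deg(\xi^j) = j - |w|\gamma s(j)$ should be reinterpreted in terms of the combinatorics of \S\ref{section:combinatorics}. Write $j = s^{-1}(g)$ for $g \in \bigsqcup_{i=0}^n \cU_{w_i}$, so that $\gamma s(j) = \gamma(g)$. Then $\kmin(g) \leq j \leq \kmax(g)$, and I would set $d := j - \kmin(g) \in [\![0,\#I(g)-1]\!]$. Using Formula (\ref{eq:5}), $\kmin(g) = a(g^{-1}) + |w|\gamma(g)$, so
$$
\deg(\xi^j) = j - |w|\gamma(g) = d + a(g^{-1}).
$$
In other words, under the correspondence $\xi^j \leftrightarrow \eta_{g^{-1}}^{d}$ (noting $I(g)=I(g^{-1})$, so $d \leq \#I(g^{-1})-1$ too), the degree $\deg(\xi^j)$ equals the orbifold degree $\deg(\eta_{g^{-1}}^{d}) = d + a(g^{-1})$ from Proposition \ref{prop:vector}(3). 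This is the crux: the grading on $\IC[\cU_{|w|}]$ is nothing but the pullback of the orbifold grading.

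Given this identification, part (1) is immediate: $0 \leq d + a(g^{-1})$ is clear, and $d + a(g^{-1}) \leq \#I(g^{-1}) - 1 + a(g^{-1}) \leq n$ follows from Formula (\ref{eq:2}), since $a(g^{-1}) + a(g) = n+1-\#I(g)$ gives $\#I(g)-1 + a(g^{-1}) = n - a(g) \leq n$ (as $a(g)\geq 0$). Alternatively one reads it directly off Formula (\ref{eq:5bis}): $j \leq \kmax(g) = n + |w|\gamma(g) - a(g)$ yields $\deg(\xi^j) = j - |w|\gamma(g) \leq n - a(g) \leq n$, and $j \geq \kmin(g) = a(g^{-1}) + |w|\gamma(g) \geq |w|\gamma(g)$ gives $\deg(\xi^j) \geq 0$. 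I would present this second, self-contained argument since it doesn't even require the reinterpretation.

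For part (2), write $g = s(j)$, $h = s(k)$ with $\gamma s(j) = \gamma(g)$, $\gamma s(k) = \gamma(h)$. Then $\deg(\xi^j) + \deg(\xi^k) = (j+k) - |w|(\gamma(g)+\gamma(h))$, while $\deg(\xi^{j+k}) = (j+k) - |w|\gamma s(j+k)$ (with indices read mod $|w|$, so $\xi^{j+k} = \xi^{(j+k) \bmod |w|}$ and one must be slightly careful that the formula $\deg$ is defined on exponents in $[\![0,|w|-1]\!]$). The inequality to prove becomes
$$
|w|\,\gamma s\big((j+k) \bmod |w|\big) \;\leq\; |w|\big(\gamma(g)+\gamma(h)\big) - |w|\left\lfloor \tfrac{j+k}{|w|}\right\rfloor \cdot (\text{correction}),
$$
which I would rather handle via (\ref{eq:7}): $\gamma(gh) \equiv \gamma(g)+\gamma(h) \bmod 1$, so $\gamma(g)+\gamma(h) = \gamma(gh) + \varepsilon$ with $\varepsilon \in \{0,1\}$. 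The point is then to show $\gamma s\big((j+k)\bmod|w|\big) \leq \gamma(gh) + \varepsilon - \lfloor(j+k)/|w|\rfloor$, using the extended definition (\ref{eq:6}) of $\kmin$ and its characterizing property that $m \geq \kmin(gh) \iff \gamma s(m) \geq \gamma(gh)$. Concretely, $j+k \geq \kmin(g) + \kmin(k) = a(g^{-1}) + a(h^{-1}) + |w|(\gamma(g)+\gamma(h))$, and combining with Formula (\ref{eq:10}) applied to $g^{-1}, h^{-1}$ (which controls $a(g^{-1})+a(h^{-1})-a((gh)^{-1})$) should show $(j+k) \bmod |w| \geq \kmin(gh)$, i.e. $\gamma s((j+k)\bmod|w|) \geq \gamma(gh)$ — but for the \emph{upper} bound on $\deg(\xi^{j+k})$ I actually want to bound $\gamma s$ from below, which is what translates the $\kmin$ inequality into a ceiling on the degree. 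The cleanest route: show directly that $(j+k) \bmod |w| \geq \kmin(gh)$ when $\varepsilon = 0$, and $(j+k) - |w| \geq \kmin(gh)$ when $\varepsilon = 1$, then use $\gamma s(\kmin(gh) + \text{anything} \geq 0) \geq \gamma(gh)$ together with $\kmax$ to bound $\gamma s$ of the actual index from below by $\gamma(gh)$, hence $\deg(\xi^{j+k}) \leq (j+k) - |w|\varepsilon - |w|\gamma(gh) = (j+k) - |w|(\gamma(g)+\gamma(h)) = \deg(\xi^j)+\deg(\xi^k)$.

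The main obstacle I anticipate is the bookkeeping in part (2) around the reduction mod $|w|$ and the $\varepsilon \in \{0,1\}$ case split: one needs to match up "$\xi^{j+k}$ means $\xi$ to the exponent $(j+k) \bmod |w|$" with the inequalities (\ref{eq:5})–(\ref{eq:6}) and the disjoint-union decomposition (\ref{eq:9})–(\ref{eq:10}) in a way that keeps all the floor/ceiling terms consistent. Once the correct translation "$\deg(\xi^j) = d + a(g^{-1})$ under $\xi^j \leftrightarrow \eta^d_{g^{-1}}$" is in place, part (2) is essentially equivalent to Remark \ref{rem:d,too,big} — the nonnegativity of the shift $d = a(g_0)+a(g_1)-a(g_0g_1)+d_0+d_1$ appearing in the cup product — so I would use that remark as the conceptual anchor and just verify the degree identity carefully.
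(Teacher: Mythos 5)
Your part (1) and your identification $\deg(\xi^j)=a(g^{-1})+d$ for $j=\kmin(g)+d$ are exactly the paper's argument, and the overall strategy for part (2) --- writing $j+k=\kmin(gh)+D+|w|\varepsilon$ with $D\geq 0$ guaranteed by Formula (\ref{eq:10}), then invoking the characterization of $\kmin$ from (\ref{eq:6}) --- is also the paper's. However, the ``cleanest route'' you settle on rests on a false intermediate claim: when $\varepsilon=0$ it can still happen that $j+k\geq|w|$, and then $(j+k)\bmod|w|=\kmin(gh)+D-|w|$ need not be $\geq\kmin(gh)$. Concretely, for $w=(1,1,2)$ take $j=2$, $k=3$: then $g=1$, $h=-1$, $\varepsilon=0$, $(j+k)\bmod 4=1$, yet $\kmin(-1)=3$, so your inequality fails (the lemma itself of course still holds there: $\deg(\xi^1)=1\leq 2+1$). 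The correct case split --- the one the paper uses --- is not on $\varepsilon$ but on whether the reduced exponent $\kmin(gh)+D=j+k-|w|\varepsilon$ is $\leq|w|-1$ or $\geq|w|$. In the first case your argument goes through verbatim; in the second (wraparound) case no $\kmin$ inequality is needed at all, because $\deg(\xi^{j+k})=|w|\bigl(\gamma(gh)-1-\gamma s(\kmin(gh)+D-|w|)\bigr)+\deg(\xi^j)+\deg(\xi^k)$ and the correction term is nonpositive simply since $\gamma(gh)<1$ and $\gamma s\geq 0$. (The appeal to $\kmax$ in your sketch is not needed anywhere.) With that repair your plan coincides with the paper's proof.
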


\begin{proof}\text{}
\begin{enumerate}
\item By definition, $\kmin(s(j))\leq j\leq\kmax(s(j))$. Formulas
(\ref{eq:5}) and (\ref{eq:5bis}) give:
$$
0\leq a(s(j)^{-1})\leq\deg(\xi^j)\leq n-a(s(j))\leq n.
$$

\item Set $g_0:=s(j)$ and $g_1:=s(k)$. Then:
\begin{align*}
j&=\kmin(g_0)+d_0 \text{ with } d_0\leq \#I(g_0)-1,\\
k&=\kmin(g_1)+d_1 \text{ with } d_1\leq \#I(g_1)-1.
\end{align*}
Using Formulas (\ref{eq:4}) and (\ref{eq:5}) one gets:
\begin{align*}
\deg(\xi^j)&=j-|w|\gamma(g_0)=\kmin(g_0)+d_0-|w|\gamma(g_0)=a(g_0^{-1})+d_0,\\
\deg(\xi^k)&=k-|w|\gamma(g_1)=\kmin(g_1)+d_1-|w|\gamma(g_1)=a(g_1^{-1})+d_1.
\end{align*}
One computes with Formula (\ref{eq:5}):
\begin{align*}
j+k&=\kmin(g_0)+d_0+\kmin(g_1)+d_1\\
&=a(g_0^{-1})+|w|\gamma(g_0)+a(g_1^{-1})+|w|\gamma(g_1)+d_0+d_1\\
&=\deg(\xi^j)+\deg(\xi^k)+|w|\left(\gamma(g_0)+\gamma(g_1)\right).
\end{align*}
Setting $d:=\deg(\xi^j)+\deg(\xi^k)-a((g_0g_1)^{-1})$ and using that:
$$
\gamma(g_0)+\gamma(g_1)\equiv\gamma(g_0g_1) \mod 1,
$$
one gets:
\begin{equation}\label{eq:11}
\xi^j\cdot\xi^k=\xi^{j+k}=\xi^{\kmin(g_0g_1)+d}.
\end{equation}
\begin{itemize}
\item If $\kmin(g_0g_1)+d\leq |w|-1$, then:
\begin{align*}
\deg(\xi^{j+k})&=\kmin(g_0g_1)+d-|w|\gamma s(\kmin(g_0g_1)+d)\\
&= |w|(\gamma(g_0g_1)-\gamma s(\kmin(g_0g_1)+d))+\deg(\xi^j)+\deg(\xi^k).
\end{align*}
By Formula (\ref{eq:6}) one has $\gamma(g_0g_1)\leq\gamma(\kmin(g_0g_1)+d)$,
hence the result.

\item If $\kmin(g_0g_1)+d\geq |w|$ then:
\begin{align*}
\deg(\xi^{j+k})&=\kmin(g_0g_1)+d-|w|-|w|\gamma s(\kmin(g_0g_1)+d-|w|)\\
&= |w|(\gamma(g_0g_1)-1-\gamma s(\kmin(g_0g_1)+d-|w|))+\deg(\xi^j)+\deg(\xi^k)
\end{align*}
and $\gamma(g_0g_1)\leq 1$, hence the result.
\end{itemize}
\end{enumerate}
\end{proof}

\begin{remark}
Looking at (\ref{eq:3}), one observes that if $w_i$ divides $|w|$ for all $i$,
then $|w|\gamma s(j)\in\IN$ for all $j$ so $\deg(\xi^j)$ is an integer for all
$j$. In this case, the orbifold $\IP(w)$ is Gorenstein.
\end{remark}

For any element $z:=\sum\limits_{\sigma
\in\cU_{|w|}}z_{\sigma}\cdot\sigma\in\IC[\cU_{|w|}]$ we set
$\deg({z}):=\max\{\deg{\sigma}\mid z_{\sigma}\neq 0\}$. Introduce the
increasing filtration:
$$
F^u\IC[\cU_{|w|}]:=\lbrace {z}\in\IC[\cU_{|w|}] \mid\deg(z)\leq u\rbrace \text{
for } u\in[0,1[\cap \IQ.
$$
By Lemma \ref{lem:degree}, the natural ring structure on $\IC[\cU_{|w|}]$ is
compatible with this filtration:
\begin{displaymath}
F^u\IC[\cU_{|w|}]\cdot F^v\IC[\cU_{|w|}]\subset F^{u+v}\IC[\cU_{|w|}].
\end{displaymath}
Set $F^{<u}\IC[\cU_{|w|}]:=\lbrace{z}\in\IC[\cU_{|w|}]\mid\deg(z)<u\rbrace$.
The induced product on the graded space
$\gr_F^{\star}\IC[\cU_{|w|}]:=\bigoplus\limits_{u\in[0,1[\cap\IQ}F^u\IC[\cU_{|w|}]/F^{<u}\IC[\cU_{|w|}]$
defines a structure of graded ring denoted $\cup$. For
$\sigma_1,\sigma_2\in\cU_{|w|}$, it is:
\begin{align*}
  \sigma_{1} \cup \sigma_{2}=\left\{\begin{array}{ll} \sigma_{1}\sigma_{2} & \text{if }
\deg(\sigma_{1})+\deg(\sigma_{2})=\deg(\sigma_{1}\sigma_{2})\\0 &
\mbox{otherwise.}\end{array}\right.
\end{align*}

\begin{example} Take again $w=(1,2,3)$. The ring structure on
$\gr_F^\star\IC[\cU_6]$ is given by the table:
\begin{displaymath}
\begin{array}{|c||cccccc|}
\hline
\gr_F^\star\IC[\cU_{6}]& 1 & \xi & \xi^2 & \xi^3 & \xi^4 & \xi^5\\
\hline\hline
1 & 1 & \xi & \xi^2 & \xi^3 & \xi^4 & \xi^5\\
\xi & \xi & \xi^2 & 0 & 0 & 0 & 0 \\
\xi^2 & \xi^2 & 0 & 0 & 0 & 0 & 0 \\
\xi^3 & \xi^3 & 0 & 0 & 0 & 0 & \xi^2  \\
\xi^4 & \xi^4 & 0 & 0 & 0 & \xi^2 & 0 \\
\xi^5 & \xi^5 & 0 & 0 & \xi^2 & 0 & 0 \\
\hline
\end{array}
\end{displaymath}
\end{example}

We define an integral $\int\colon\gr_F^\star\IC[\cU_{|w|}]\to\IC$ by setting
for $j\in[\![0,|w|-1]\!]$:
$$
\int\xi^j=\begin{cases} \frac{1}{<w>} & \text{if } j=n\\0 &
\text{otherwise}\end{cases}
$$
and extending by linearity. The reason is that the only $j$ such that $s(j)=1$
(non twisted sector) and $\deg(\xi^j)=n$ is $j=n$. We further define a pairing
$\model{-}{-}$ on $\gr_F^\star\IC[\cU_{|w|}]$ by setting for
$\sigma_1,\sigma_2\in\cU_{|w|}$:
$$
\model{\sigma_1}{\sigma_2}:=\int\sigma_1\cup\sigma_2
$$
and extending by bilinearity.

\begin{example} Take again $w=(1,2,3)$. The matrix of the pairing $\model{-}{-}$ in the basis
$\bs{\xi}$ is:
\begin{displaymath}
\left( \begin{array}{ccc|ccc}
0&0&1/6&0&0&0\\
0&1/6&0&0&0&0\\
1/6&0&0&0&0&0\\
\hline
0&0&0&0&0&1/6\\
0&0&0&0&1/6&0\\
0&0&0&1/6&0&0\\
  \end{array}\right)
\end{displaymath}
\end{example}

\begin{lemma}
The pairing $\model{-}{-}$ is perfect.
\end{lemma}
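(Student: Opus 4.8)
The plan is to show that the Gram matrix of $\model{-}{-}$ in the basis $\bs{\xi}$ is invertible by exhibiting, for each basis element, a unique partner with which it pairs nontrivially. First I would unwind the definitions: for $j,k\in[\![0,|w|-1]\!]$ one has $\model{\xi^j}{\xi^k}=\int\xi^j\cup\xi^k$, which is nonzero only if $\deg(\xi^j)+\deg(\xi^k)=\deg(\xi^{j+k})$ \emph{and} $\xi^{j+k}=\xi^n$ (equivalently $j+k\equiv n \bmod |w|$, since $\deg(\xi^n)=n-|w|\gamma s(n)=n-|w|\cdot 0=n$ forces the surviving term to be exactly $\xi^n$). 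So the problem reduces to a combinatorial statement about the involution $j\mapsto k$ with $j+k\equiv n \bmod |w|$: for each $j$ there is exactly one such $k$, and I must check the degree-additivity condition $\deg(\xi^j)+\deg(\xi^k)=n$ holds for \emph{that} $k$.

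Next I would translate this back through the bijection $s$. Writing $g_0:=s(j)$, so $j=\kmin(g_0)+d_0$ with $0\le d_0\le \#I(g_0)-1$ and $\deg(\xi^j)=a(g_0^{-1})+d_0$ (this identity is established inside the proof of Lemma \ref{lem:degree}(2)). The key observation is that $j+k\equiv n\bmod |w|$ should correspond to $g_1=s(k)=g_0^{-1}$ together with the complementary choice $d_1=\#I(g_0)-1-d_0$; note $I(g_0^{-1})=I(g_0)$ so this $d_1$ is in the admissible range. Using Formula (\ref{eq:5}), $\kmin(g_0)+\kmin(g_0^{-1})=a(g_0^{-1})+a(g_0)+|w|(\gamma(g_0)+\gamma(g_0^{-1}))$; since $\gamma(g_0)+\gamma(g_0^{-1})=1$ when $g_0\neq 1$ (and $=0$ when $g_0=1$), and by Formula (\ref{eq:2}) $a(g_0)+a(g_0^{-1})=n+1-\#I(g_0)$, one computes $j+k=\kmin(g_0)+d_0+\kmin(g_0^{-1})+d_1$ and checks it equals $n$ (when $g_0=1$) or $n+|w|$ (when $g_0\neq 1$), i.e. $j+k\equiv n\bmod|w|$ exactly as required. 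Conversely the same arithmetic shows any $k$ with $j+k\equiv n\bmod|w|$ must arise this way, so the partner is unique. Then $\deg(\xi^j)+\deg(\xi^k)=a(g_0^{-1})+d_0+a(g_0)+d_1=(n+1-\#I(g_0))+(\#I(g_0)-1)=n$, confirming the degrees add correctly, so $\xi^j\cup\xi^k=\xi^{j+k}$ reduces in the graded ring to (a scalar multiple of) $\xi^n$ and $\int$ of it is $\tfrac{1}{<w>}\neq 0$.

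Having done this, I would conclude: the Gram matrix is, up to the ordering of the basis, a permutation matrix (given by the fixed-point-free-on-most-indices involution $j\mapsto k$) scaled by $\tfrac{1}{<w>}$, hence invertible, so $\model{-}{-}$ is perfect. This is precisely the structure visible in the $w=(1,2,3)$ example matrix above, which I would keep in mind as a sanity check.

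The main obstacle is the bookkeeping of the two cases $g_0=1$ versus $g_0\neq 1$ in the $\bmod |w|$ reduction — in particular making sure the "wrap-around" term from Formula (\ref{eq:6})/(\ref{eq:5}) is handled consistently with the grading convention $\deg(\xi^j)=j-|w|\gamma s(j)$, and verifying that no \emph{other} value of $j+k$ congruent to $n$ can slip in because $\gamma(g_0)+\gamma(g_1)\equiv\gamma(g_0g_1)\bmod 1$ pins down $g_1=g_0^{-1}$. Everything else is a direct substitution using Formulas (\ref{eq:2}), (\ref{eq:4}), (\ref{eq:5}), (\ref{eq:6}) and the computation already carried out in Lemma \ref{lem:degree}.
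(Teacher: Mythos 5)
Your proposal is correct and follows essentially the same route as the paper: for $\xi^j=\xi^{\kmin(g_0)+d_0}$ one takes the partner $k=\kmin(g_0^{-1})+d_1$ with $d_1=\#I(g_0)-1-d_0$, checks via Formulas (\ref{eq:11}) and (\ref{eq:2}) that $\xi^j\cdot\xi^k=\xi^n$ with $\deg(\xi^j)+\deg(\xi^k)=n$, and concludes the pairing is $\tfrac{1}{<w>}$. Your extra step of spelling out why the partner is unique (so the Gram matrix is a scaled permutation matrix) is left implicit in the paper but is the same argument.
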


\begin{proof}
As in the proof of Lemma \ref{lem:degree}, for $\xi^j=\xi^{\kmin(g_0)+d_0}$
with $g_0=s(j)$ and $d_0\leq\#I(g_0)-1$, set $k:=\kmin(g_0^{-1})+d_1$ with
$d_1:=\#I(g_0)-1-d_0$. Then by Formula (\ref{eq:11}), $\xi^j\cdot\xi^k=\xi^n$
with $\deg(\xi^n)=n$ and by Formula (\ref{eq:2}):
$$
\deg(\xi^j)+\deg(\xi^k)=a(g_0)+a(g_0^{-1})+\#I(g_0)-1=n
$$
so $\model{\xi^j}{\xi^k}=\frac{1}{<w>}$.
\end{proof}

As a consequence, the structure
$\left(\gr^\star_F\IC[\cU_{|w|}],\cup,\model{-}{-}\right)$ is a graded
Frobenius algebra, as
$\left(A^\star_{\orb}(\IP(w)),\cup,\langle-,-\rangle\right)$ is.

%%%%%%%%%%%%%%%%%%%%%%%%%%%%%%%%%%%%%%%%%%%%%%%%%%%%%%%%%%%%%%%%%%%%%%
\subsection{Isomorphism with the orbifold Chow ring}
\label{subsec:isomorphism}\text{}
%%%%%%%%%%%%%%%%%%%%%%%%%%%%%%%%%%%%%%%%%%%%%%%%%%%%%%%%%%%%%%%%%%%%%%

Define a linear map $\Xi\colon
A^\star_{\orb}(\IP(w))\to\gr^\star_F\IC[\cU_{|w|}]$ by setting
$\Xi(\eta_g^d):=\xi^{\kmin(g^{-1})+d}$ and extending by linearity.

\begin{example}
Take again $w=(1,2,3)$. The map $\Xi$ is given by:
\begin{align*}
\Xi(\eta_1^0)&=1,&\Xi(\eta_1^1)&=\xi,&\Xi(\eta_1^2)&=\xi^2,&\Xi(\eta_\jj^0)&=\xi^5,
&\Xi(\eta_{-1}^0)&=\xi^4,&\Xi(\eta_{\jj^2}^0)&=\xi^3.
\end{align*}
\end{example}

\begin{theorem}\label{th:isom} The map $\Xi:A^\star_{\orb}(\IP(w))\to\gr^\star_F\IC[\cU_{|w|}]$
is an isomorphism of graded Frobenius algebras.
\end{theorem}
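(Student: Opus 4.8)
The plan is to check that $\Xi$ is a graded linear isomorphism, then that it intertwines the two products, and finally that it respects the pairings; the Frobenius structure follows formally once the product and pairing are matched. For the first part, $\Xi$ sends the basis $\bs{\eta}$ bijectively to the basis $\bs{\xi}$: indeed, as $g$ ranges over $\bigcup_i\cU_{w_i}$ and $d$ over $[\![0,\#I(g)-1]\!]$, the integers $\kmin(g^{-1})+d$ range (using Formula (\ref{eq:4}) applied to $g^{-1}$, together with $\#I(g)=\#I(g^{-1})$) exactly over the block $[\![\kmin(g^{-1}),\kmax(g^{-1})]\!]$, and these blocks partition $[\![0,|w|-1]\!]$ as $g$ varies. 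Grading: by Proposition \ref{prop:vector}(3), $\deg(\eta_g^d)=d+a(g)$, while from the computation already carried out in the proof of Lemma \ref{lem:degree} (with $g_0$ replaced by $g^{-1}$), $\deg(\xi^{\kmin(g^{-1})+d})=a(g)+d$. So $\Xi$ is a graded isomorphism of vector spaces.

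For compatibility with the product, take two basis elements $\eta_{g_0}^{d_0}$ and $\eta_{g_1}^{d_1}$. Write $j:=\kmin(g_0^{-1})+d_0$ and $k:=\kmin(g_1^{-1})+d_1$, so $\Xi(\eta_{g_0}^{d_0})=\xi^j$, $\Xi(\eta_{g_1}^{d_1})=\xi^k$. Applying Equation (\ref{eq:11}) with $(g_0,g_1)$ replaced by $(g_0^{-1},g_1^{-1})$ gives $\xi^{j+k}=\xi^{\kmin((g_0g_1)^{-1})+d}$ where $d=\deg(\xi^j)+\deg(\xi^k)-a(g_0g_1)=d_0+d_1+a(g_0)+a(g_1)-a(g_0g_1)$, which is precisely the exponent appearing in Proposition \ref{prop:cup} and Remark \ref{rem:d,too,big}. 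Then one matches the two vanishing conventions: in $\gr_F^\star\IC[\cU_{|w|}]$ the product $\xi^j\cup\xi^k$ is zero unless $\deg(\xi^{j+k})=\deg(\xi^j)+\deg(\xi^k)$, i.e. unless $\gamma(g_0g_1)$ equals $\gamma s(\kmin((g_0g_1)^{-1})+d)$ with no carry and no wrap-around; by Formula (\ref{eq:6}) and Formula (\ref{eq:4}) this happens exactly when $(g_0g_1)^{-1}$, hence $g_0g_1$, lies in $\bigcup_i\cU_{w_i}$ \emph{and} $\kmin((g_0g_1)^{-1})+d\leq\kmax((g_0g_1)^{-1})$, i.e. $d\leq\#I(g_0g_1)-1=\dim\IP(w_{I(g_0g_1)})$. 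These are exactly the two cases in Remark \ref{rem:d,too,big} where $\eta_{g_0g_1}^d$ is declared zero, and in the remaining case $\Xi(\eta_{g_0g_1}^d)=\xi^{\kmin((g_0g_1)^{-1})+d}=\xi^{j+k}=\xi^j\cup\xi^k$. Hence $\Xi(\eta_{g_0}^{d_0}\cup\eta_{g_1}^{d_1})=\Xi(\eta_{g_0}^{d_0})\cup\Xi(\eta_{g_1}^{d_1})$, and $\Xi$ is a ring homomorphism (it sends the unit $\eta_1^0$ to $1$ since $\kmin(1)=0$).

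For the pairing, combine the product computation with the two integrals. On one side, Proposition \ref{prop:dualite} says $\langle\eta_{g_0}^{d_0},\eta_{g_1}^{d_1}\rangle$ is $1/<w>$ when $g_0g_1=1$ and $\deg(\eta_{g_0}^{d_0})+\deg(\eta_{g_1}^{d_1})=n$, and $0$ otherwise. On the other side $\model{\xi^j}{\xi^k}=\int\xi^j\cup\xi^k$, which is $1/<w>$ exactly when $\xi^j\cup\xi^k=\xi^n$ with the correct degree — and by the product analysis this forces $(g_0g_1)^{-1}=1$ (so that $s$ of the resulting index is the non-twisted sector) and $\deg(\xi^j)+\deg(\xi^k)=n$; this is the same condition as on the Chow side under $\Xi$. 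So $\model{\Xi\alpha}{\Xi\beta}=\langle\alpha,\beta\rangle$ on basis elements, hence everywhere by bilinearity. Therefore $\Xi$ is an isomorphism of graded Frobenius algebras.

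\textbf{Main obstacle.} The routine part is the degree bookkeeping; the one place demanding care is the precise matching of the \emph{vanishing} conventions of Remark \ref{rem:d,too,big} with the failure of additivity of $\deg$ in $\gr_F^\star\IC[\cU_{|w|}]$ — i.e. showing that "$g_0g_1\notin\bigcup_i\cU_{w_i}$" corresponds to a strict inequality $\gamma s(\kmin)>\gamma(g_0g_1)$ (carry from the wrap in Formula (\ref{eq:6})) and that "$d>\dim\IP(w_{I(g_0g_1)})$" corresponds to overshooting the block $[\![\kmin,\kmax]\!]$ of $(g_0g_1)^{-1}$, which then lands on a strictly larger value of $\gamma s$. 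Getting both of these equivalences exactly right, rather than up to an off-by-one, is the crux.
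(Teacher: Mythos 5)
Your proof is correct and follows essentially the same route as the paper's: the same basis bijection via the blocks $[\![\kmin(g^{-1}),\kmax(g^{-1})]\!]$, the same degree computation $\deg(\xi^{\kmin(g^{-1})+d})=a(g)+d$, the same application of Formula (\ref{eq:11}) to identify the exponent $d$ with that of Proposition \ref{prop:cup}, and the same matching of the vanishing conventions of Remark \ref{rem:d,too,big} with the failure of degree additivity (including the wrap-around case), followed by the same pairing check. Only a cosmetic slip: the additivity condition should read $\gamma((g_0g_1)^{-1})=\gamma s(\kmin((g_0g_1)^{-1})+d)$ rather than $\gamma(g_0g_1)=\cdots$, which changes nothing since $g_0g_1\in\bigcup_i\cU_{w_i}$ if and only if $(g_0g_1)^{-1}$ is.
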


\begin{proof}\text{}
\paragraph{{\bf Step 1:} {\it $\Xi$ is an isomorphim.}} By definition of
$\kmin(g)$ and since $d\leq \#I(g)-1$, as $g$ and $d$ vary, the numbers
$\kmin(g^{-1})+d$ are all distinct and cover $[\![0,|w|-1]\!]$, so the map
$\Xi$ maps the basis $\bs{\eta}$ onto the basis $\bs{\xi}$.

\paragraph{{\bf Step 2:} {\it $\Xi$ is graded.}} It is:
\begin{align*}
\deg(\xi^{\kmin(g^{-1})+d})&=\kmin(g^{-1})+d-|w|\gamma s(\kmin(g^{-1})+d)\\
&=\kmin(g^{-1})+d-|w|\gamma (g^{-1}) \text{ by Formula } (\ref{eq:4})\\
&=a(g)+d \text{ by Formula } (\ref{eq:5})\\
&=\deg(\eta_g^d).
\end{align*}

\paragraph{{\bf Step 3:} {\it $\Xi$ is a ring morphism.}} We use notation of
Proposition \ref{prop:cup} and Step 2. The same computation as in Formula
(\ref{eq:11}) gives:
\begin{equation}\label{eq:12}
\xi^{\kmin(g_0^{-1})+d_0}\cdot\xi^{\kmin(g_1^{-1})+d_1}=\xi^{\kmin((g_0g_1)^{-1})+d}.
\end{equation}
\paragraph{\bf i} Assume that $(g_0g_1)^{-1}\in\bigcup\limits_{i=0}^n\cU_{w_i}$
and $d\leq\dim\IP(w_{I(g_0g_1)})$. Then Formula (\ref{eq:12}) means:
$$
\Xi(\eta_{g_0}^{d_0})\cdot\Xi(\eta_{g_1}^{d_1})=\Xi(\eta_{g_0g_1}^{d})=\Xi(\eta_{g_0}^{d_0}\cup\eta_{g_1}^{d_1}).
$$
Since $\Xi$ is graded, this implies:
$$
\Xi(\eta_{g_0}^{d_0})\cup\Xi(\eta_{g_1}^{d_1})=\Xi(\eta_{g_0}^{d_0}\cup\eta_{g_1}^{d_1}).
$$

\paragraph{\bf ii} Assume that $(g_0g_1)^{-1}\notin\bigcup\limits_{i=0}^n\cU_{w_i}$
or  $d>\dim\IP(w_{I(g_0g_1)})$. Since:
$$
\deg(\xi^{\kmin(g_0^{-1})+d_0})+\deg(\xi^{\kmin(g_1^{-1})+d_1})=\deg(\eta_{g_0}^{d_0})+
\deg(\eta_{g_1}^{d_1})=a(g_0g_1)+d,
$$
we have to show that $\deg(\xi^{\kmin((g_0g_1)^{-1})+d})<a(g_0g_1)+d$.
\begin{itemize}
\item If $\kmin((g_0g_1)^{-1})+d\leq |w|-1$, then:
\begin{align*}
\deg(\xi^{\kmin((g_0g_1)^{-1})+d})&=\kmin((g_0g_1)^{-1})+d-|w|\gamma
s(\kmin((g_0g_1)^{-1})+d)\\
&=a(g_0g_1)+d+|w|\gamma((g_0g_1)^{-1})-|w|\gamma s(\kmin((g_0g_1)^{-1})+d).
\end{align*}
In both cases $d>\#I(g_0g_1)-1$ or
$(g_0g_1)^{-1}\notin\bigcup\limits_{i=0}^n\cU_{w_i}$, one has by Formula
(\ref{eq:6}) $\gamma s(\kmin((g_0g_1)^{-1})+d)>\gamma((g_0g_1)^{-1})$, hence
the result.

\item If $\kmin((g_0g_1)^{-1})+d\geq |w|$, then:
\begin{align*}
\deg(\xi^{\kmin((g_0g_1)^{-1})+d})&=\kmin((g_0g_1)^{-1})+d-|w|\\
&\quad-|w|\gamma s(\kmin((g_0g_1)^{-1})+d-|w|)\\
&=a(g_0g_1)+d+|w|\gamma((g_0g_1)^{-1})-|w|\\
&\quad-|w|\gamma s(\kmin((g_0g_1)^{-1})+d-|w|).
\end{align*}
Since $\gamma((g_0g_1)^{-1})<1+\gamma s(\kmin((g_0g_1)^{-1})+d-|w|)$, one gets
the result.
\end{itemize}

\paragraph{{\bf Step 4:} {\it $\Xi$ is compatible with the pairings.}} We use
notation of Proposition \ref{prop:dualite}.
\begin{itemize}
\item If $g_0g_1\neq 1$, then $s(\kmin((g_0g_1)^{-1})+d)\neq 1$ so using
Formula (\ref{eq:11}) we see that
$\model{\Xi(\eta_{g_0}^{d_0})}{\Xi(\eta_{g_1}^{d_1})}=0=\langle\eta_{g_0}^{d_0},\eta_{g_1}^{d_1}\rangle$.
\item If $g_0g_1=1$, then $d=\deg(\eta_{g_0}^{d_0})+\deg(\eta_{g_1}^{d_1})$ and
$\deg(\xi^{\kmin((g_0g_1)^{-1})+d})=d$ so if $d<n$, then
$\model{\Xi(\eta_{g_0}^{d_0})}{\Xi(\eta_{g_1}^{d_1})}=0=\langle\eta_{g_0}^{d_0},\eta_{g_1}^{d_1}\rangle$
and if $d=n$, then
$\model{\Xi(\eta_{g_0}^{d_0})}{\Xi(\eta_{g_1}^{d_1})}=\frac{1}{<w>}=\langle\eta_{g_0}^{d_0},\eta_{g_1}^{d_1}\rangle$.
\end{itemize}
\end{proof}

\begin{example}
Take again $w=(1,2,3)$. The orbifold Chow ring of $\IP(1,2,3)$ can be pictured
as:
\begin{center}
\begin{picture}(150,115)
\put(44,15){\line(1,0){106}} \put(45,14){\line(0,1){76}}
\put(75,14){\line(0,1){61}}\put(105,14){\line(0,1){61}}\put(135,14){\line(0,1){16}}
\put(44,30){\line(1,0){91}}\put(44,45){\line(1,0){2}}\put(44,60){\line(1,0){2}}
\put(44,75){\line(1,0){2}}
\put(75,45){\line(1,0){30}}\put(75,60){\line(1,0){30}}
\put(75,75){\line(1,0){30}} \put(56,19){\makebox{$1$}}
\put(86,34){\makebox{$\xi^5$}} \put(86,49){\makebox{$\xi^4$}}
\put(86,64){\makebox{$\xi^3$}} \put(86,19){\makebox{$\xi$}}
\put(116,19){\makebox{$\xi^2$}} \put(56,4){\makebox{$0$}}
\put(86,4){\makebox{$1$}}\put(116,4){\makebox{$2$}}
\put(146,4){\makebox{orbifold degree}} \put(35,19){\makebox{$1$}}
\put(35,34){\makebox{$\jj$}}\put(30,49){\makebox{$-1$}}
\put(34,64){\makebox{$\jj^2$}}\put(4,90){\makebox{inertia}}
\put(-8,79){\makebox{components}}
\end{picture}
\end{center}
\end{example}

\begin{example}
Take $w=(1,\ldots,1)$ ($n$ times). Then $\gr^\star_F\IC[\cU_n]\cong\IC[h]/h^n$
where $h$ has degree one. On the other hand, $\IP(w)\cong\IP^n_\IC$ and
$\mathcal{I}\IP(w)=\IP^n_\IC$ so we recover the well-known fact:
$$
A^\star_{\orb}(\IP(1,\ldots,1))\cong\gr^\star_F\IC[\cU_n]\cong\IC[h]/h^n\cong
A^\star(\IP^n_\IC).
$$
\end{example}

\bibliographystyle{amsalpha}
\bibliography{BMPModelBib}

\end{document}